\title{On the Sumset of Sets of Size $k$}
\author{Vincent Schinina}
\date{July 1, 2025}
\newcommand{\N}{\mathbb{N}}
\newcommand{\Z}{\mathbb{Z}}
\newcommand{\cc}[1]{\mathcal{#1}}
\newcommand{\pp}[1]{\left(#1\right)}
\newcommand{\bb}[1]{\left[#1\right]}
\newcommand{\br}[1]{\left\{#1\right\}}
\newtheorem{theorem}[subsection]{Theorem}
\newtheorem{definition}[subsection]{Definition}
\newtheorem{lemma}[subsection]{Lemma}
\newtheorem*{notation}{Notation}
\newtheorem{corollary}[subsection]{Corollary}
\begin{document}

\maketitle

\begin{abstract}
    The set $\mathcal{R}_{G}(h,k)$ consists of all possible sizes for the $h$-fold sumset of sets containing $k$ elements from an additive abelian group $G$. The exact makeup of this set is still unknown, but there has been progress towards determining which integers are present. We know that $\mathcal{R}_{G}(h,k)\subseteq\bb{hk-h+1,\binom{h+k-1}{h}}$, where the right side is an interval of integers that includes the endpoints. These endpoints are known to be attained. We will prove that the integers in $\bb{hk-h+2,hk-1}$ are not possible sizes for the $h$-fold sumset of a set containing $k\geq 4$ elements of a torsion-free additive abelian group $G$. Furthermore, we will confirm that this interval can't be made larger by exhibiting a subset of $G$ whose $h$-fold sumset has size $hk$. 
\end{abstract}

\begin{centering}
    \section*{Introduction}
\end{centering}
\addcontentsline{toc}{section}{Introduction}

In a paper by Nathanson \cite{nathanson2025problemsadditivenumbertheory}, he posed a problem, labeled Problem 1, about computing $\cc{R}_G(h,k)$ for an additive abelian group $G$, also defined below, for a fixed $k$ as $h$ increases. In Nathanson's Theorem 7, he confirmed that $hk-h+1$ is the minimum element of $\cc{R}_G(h,k)$ and that $hk-h+2$ is not in $\cc{R}_G(h,k)$, where $G$ is an ordered additive abelian group. Moreover, from Nathanson's Theorem 2, it follows that the maximum of $\cc{R}_G(h,k)$ is $\binom{h+k-1}{h}$, where $G$ is an ordered additive abelian group. In this paper, we will provide more insight about the structure of $\cc{R}_G(h,k)$ in a step towards solving Nathanson's Problem 1 for a torsion-free abelian group $G$. As motivation for the results, $G=\Z$ was examined first. Data retrieved from a computer program appeared to show that the result $hk-h+2$ is not in $\cc{R}_\Z(h,k)$ could be extended to the result that the inclusive interval of integers $\bb{hk-h+2,hk-1}$ is not in $\cc{R}_\Z(h,k)$, for all $h>1$ and $k\geq 4$. This was proven in the first version of this paper.\footnote{For an alternate method of proving this result, see the papers by Mohan and Pandey \cite{DBLP:journals/cdm/MohanP23}, and Tang and Xing \cite{Tang2021SOME}.} From that proof, it seemed that the result could be extended further to any torsion-free abelian group $G$ by using the same method. To do this, we will define all the necessary terminology, prove required facts, and then build up some useful preliminary results that will allow us to confirm this observation. Lastly, we will conclude that this interval can't be enlarged by showing that $hk\in\cc{R}_G(h,k)$.

\begin{centering}
    \section{Terminology}
\end{centering}

We'll begin by introducing definitions and notations that will be used throughout this paper.

\begin{notation}
    We denote some common number systems with the following symbols:
    \begin{align*}
        \text{Natural Numbers: }\N&=\br{1,2,3,4,5,\dots},\\
        \text{Natural Numbers with }0\text{: }\N_0&=\br{0,1,2,3,4,\dots},\\
        \text{Integers: }\Z&=\br{\dots,-2,-1,0,1,2,\dots}.
    \end{align*}
\end{notation}

\begin{notation}
    We write $\bb{a,b}$ to denote the inclusive interval of integers between $a$ and $b$. That is, $$[a,b]=\br{c\in\Z\mid a\leq c\leq b}.$$
\end{notation}

\begin{definition}
    A \textbf{group} is a binary operation $*$ on a set $G$ satisfying the following four properties:
    \begin{enumerate}[1)]
        \item{Closure: For all $a,b\in G$, $a*b\in G$,}
        \item{Associative: For all $a,b,c$, $\pp{a*b}*c=a*\pp{b*c}$,}
        \item{Identity: There exists $e\in G$ such that for all $a\in G$, $a*e=e*a=a$,}
        \item{Inverse: For all $a\in G$, there exists $a^{\prime}$ such that $a*a^{\prime}=a^{\prime}*a=e$.}
    \end{enumerate}
\end{definition}

\begin{definition}
    A group $G$ is \textbf{abelian} if it satisfies the commutative property: for all $a,b\in G$, $a*b=b*a$.
\end{definition}

\begin{definition}
    A relation $\preceq$ is a \textbf{total order} on a set $A$ if it satisfies the following four properties:
    \begin{enumerate}[1)]
        \item{Reflexive: For all $a\in A$, $a\preceq a$,}
        \item{Transitive: For all $a,b,c\in A$, if $a\preceq b$ and $b\preceq c$, then $a\preceq c$,}
        \item{Anti-Symmetric: For all $a,b\in A$, if $a\preceq b$ and $b\preceq a$, then $a=b$},
        \item{Strongly Connected:} For all $a,b\in A$, $a\preceq b$ or $b\preceq a$.
    \end{enumerate}
    We write $\prec$ instead of $\preceq$ when we want to exclude equality. Note that the transitive property holds in the stricter setting where $\prec$ replaces $\preceq$.
\end{definition}

\begin{definition}
    An group $G$ is \textbf{ordered} if it has a total order $\preceq$ satisfying $a*c\prec b*c$ and $c*a\prec c*b$, for all $a,b,c\in G$. Note that any subset $A$ of $G$ inherits this property.
\end{definition}

\begin{notation}
    For an ordered group $G$ with identity $e$, we use the following symbols to denote some important subsets of $G$:
    \begin{align*}
        \text{Positive Elements of }G\text{: }G^{+}&=\br{g\in G\mid e\prec g},\\
        \text{Nonnegative Elements of }G\text{: }G_e&=\br{g\in G\mid e\preceq g},\\
        \text{Negative Elements of }G\text{: }G^{-}&=\br{g\in G\mid g\prec e}.
    \end{align*}
\end{notation}

\begin{definition}
    A group $G$ is called $\textbf{additive}$ if it is written in additive notation. In this case, we use $+$ instead of $*$, $0$ instead of $e$, and $-g$ instead of $g^{\prime}$, for all $g\in G$. From the context, it will be clear when we are using the typical interpretation and the the additive group interpretation for $+, 0$, or $-$.
\end{definition}

\begin{notation}
    Let $g\in G$ and $h\in\Z$, where $G$ is an additive group. 
    \begin{enumerate}[1)]
    \item{If $h>0$, then $hg$ denotes the sum of $h$ many $g$'s. That is,
    $$hg=\underbrace{g+g+\cdots+g}_{h\text{ times}}.$$}
    \item{If $h=0$, then $hg=0$. The second $0$ is the identity element of $G$.}
    \item{If $h<0$, then $hg=-\bb{(-h)g}$, where $-\bb{(-h)g}$ is the inverse of $(-h)g$. Note that $-h>0$ which means that this requires (1) and can also be written as $hg=(-h)(-g)$, where $-g$ is the inverse of $g$.}
    \end{enumerate}
\end{notation}

\begin{definition}\thlabel{torsion-free}
    An additive group $G$ is \textbf{torsion-free} if for all $g\in G$ with $g\neq 0$, $hg\neq 0$, for all $h\in\N$.
\end{definition}

\begin{definition}\thlabel{sumset}
    Let $h\in\N$ with $h>1$. The \textbf{$h$-fold sumset} of a set $A$ of an additive abelian group $G$ is
    $$hA=\br{a_1+\cdots+a_h\mid a_1,\dots,a_h\in A}.$$
    Note that $a_1,\dots,a_h\in A$ do not need to be distinct. For a set $A=\br{a_1,\dots, a_k}\subseteq G$, we may also say
    $$hA=\br{b_1a_1+\cdots+b_ka_k\;\big|\; b_1,\dots,b_k\in\N_0\text{ and }b_1+\cdots+b_k=h}.$$
\end{definition}

\begin{notation}
    The set of $h$-fold sumset sizes for finite sets $A$ with size $k$ contained in an additive abelian group $G$ is denoted by
    $$\cc{R}_{G}(h,k)=\br{\abs{hA}\big| A\subseteq G,\;\abs{A}=k}.$$
\end{notation}

\begin{definition}\thlabel{trivial element}
    Let $A=\br{a_1,a_2,\dots,a_k}$ be a subset of an ordered additive abelian group $G$ with total order $\preceq$ such that $a_1\prec a_2\prec\cdots\prec a_k$. An element of $hA$ is called \textbf{trivial} if it appears on the increasing list (uses the ordered assumption) 
    \begin{align*}
        ha_1&\prec (h-1)a_1+a_2\prec\cdots\prec a_1+(h-1)a_2\prec ha_2\\
        &\prec(h-1)a_2+a_3\prec\cdots\prec a_2+(h-1)a_3\prec ha_3\\
        &\;\;\vdots\\
        &\prec (h-1)a_{k-1}+a_k\prec\cdots\prec a_{k-1}+(h-1)a_k\prec ha_k.
    \end{align*}
    Otherwise, the element is called \textbf{nontrivial}. Note that the above list contains $hk-h+1$ elements of $hA$.
\end{definition}

\begin{center}
    \section{Theorems about Ordered Additive Abelian Groups}
\end{center}

In the previous section, we defined an ordered additive abelian group and some notation. We will now prove some known facts about them. Note that the additive abelian group properties will be used without referencing them.

\begin{lemma}\thlabel{a<b and c<d implies a+c<b+d}
    Let $G$ be an ordered additive abelian group with respect to $\preceq$ and let $a,b,c,d\in G$. If $a\preceq b$ and $c\prec d$, then $a+c\prec b+d$.
\end{lemma}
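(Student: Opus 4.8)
The plan is to prove the statement directly from the definition of an ordered additive abelian group, using the two cancellation/translation inequalities $a*c \prec b*c$ and $c*a \prec c*b$ together with transitivity of $\prec$. The statement splits naturally into two cases depending on whether $a \prec b$ or $a = b$, since the hypothesis $a \preceq b$ is the weaker of the two.

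First I would handle the case $a = b$. Then $a + c \prec a + d$ follows immediately from the defining property of an ordered group applied to $c \prec d$ (translating on the left by $a$), and since $a = b$ this says $a + c \prec b + d$, as desired. Second, I would handle the case $a \prec b$. Here I would produce two strict inequalities and chain them: from $a \prec b$ and translation on the right by $c$, we get $a + c \prec b + c$; from $c \prec d$ and translation on the left by $b$, we get $b + c \prec b + d$. Transitivity of $\prec$ (noted as available in the definition of total order) then yields $a + c \prec b + d$. Commutativity is what lets me freely move between $a + c$ and $c + a$ so that the group's two ordering axioms line up with the expressions I want.

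I do not anticipate a genuine obstacle here — this is a routine foundational lemma. The only thing to be careful about is bookkeeping: making sure that in each application of the order axiom I have identified which element plays the role of the ``translate'' and on which side, and that I invoke transitivity only on a genuine chain $x \prec y \prec z$. A minor subtlety worth stating explicitly is that the hypothesis gives $c \prec d$ strictly, so the conclusion is strict regardless of which of the two cases for $a, b$ we are in; there is no need for a separate argument that the inequality does not collapse to equality.
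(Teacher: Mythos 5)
Your proof is correct and follows essentially the same route as the paper's: translate $a\preceq b$ by $c$ and $c\prec d$ by $b$, then chain via transitivity. The only cosmetic difference is that you split the cases $a=b$ and $a\prec b$ explicitly, whereas the paper handles the equality case parenthetically in a single argument.
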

\begin{proof}
    We begin by assuming that $a\preceq b$ and $c\prec d$. Using the fact that $G$ is ordered, we add $c$ to both sides of $a\preceq b$ (doesn't require the ordered assumption if equality occurs) and $b$ to both sides of $c\prec d$. This gives us
    $$a+c\preceq b+c\quad\text{and}\quad b+c\prec b+d,$$
    respectively. By the transitive property (or substitution if equality occurs), we see that $a+c\prec b+d$, as wanted.
\end{proof}

\begin{lemma}\thlabel{a<b iff a+c=b}
    Let $G$ be an ordered additive abelian group with respect to $\preceq$ and let $a,b\in G$. Then, $a\prec b$ if and only if there exists a unique $c\in G^{+}$ such that $a+c=b$.
\end{lemma}
\begin{proof}
    We start by assuming that $a\prec b$. For existence, we use the fact that $G$ is ordered and add $-a$ to both sides. With simplifications, we obtain $0\prec -a+b$. By definition, $-a+b\in G^{+}$. So, let $c=-a+b$. It follows that $a+c=b$, as desired.

    For uniqueness, we assume that, there also exists $c^{\prime}$ such that $a+c^{\prime}=b$. By substitution, $a+c=a+c^{\prime}$. By adding $-a$ to both sides and simplifying, we see that $c=c^{\prime}$. Hence, $c\in G^{+}$ is unique.
    
    Conversely, suppose there exists a unique $c\in G^{+}$ such that $a+c=b$. Since $c\in G^{+}$, then, by definition, $0\prec c$. Using the fact that $G$ is ordered, we can add $a$ to both sides and, with simplifications, get $a\prec a+c$. By substitution of $a+c=b$, we have that $a\prec b$.
\end{proof}

\begin{lemma}\thlabel{g>0 iff -g<0}
    Let $G$ be an ordered additive abelian group with respect to $\preceq$ and let $g\in G$ with $g\neq 0$. Then, $g\in G^{+}$ if and only if $-g\in G^{-}$.
\end{lemma}
\begin{proof}
    We begin by assuming that $g\in G^{+}$. By definition, $0\prec g$. Using the fact that $G$ is ordered, we can add $-g$ to both sides and, with simplifications, obtain $-g\prec 0$. Hence, $-g\in G^{-}$, by definition. 

    Lastly, assume $-g\in G^{-}$. By definition, $-g\prec 0$. Using the fact that $G$ is ordered, we can add $g$ to both sides and, with simplifications, obtain $0\prec g$. By definition, $g\in G^{+}$.
\end{proof}

\begin{lemma}\thlabel{0<g implies 0<hg}
    Let $G$ be an ordered additive abelian group with respect to $\preceq$. If $g\in G^{+}$, then, for all $h\in\N$, $hg\in G^{+}$. 
\end{lemma}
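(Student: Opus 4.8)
The plan is to proceed by induction on $h\in\N$, using \thref{a<b and c<d implies a+c<b+d} as the engine for the inductive step. For the base case $h=1$, the claim is immediate: $1g=g$, and $g\in G^{+}$ by hypothesis.

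For the inductive step, I would assume that $hg\in G^{+}$ for some $h\in\N$ and show $(h+1)g\in G^{+}$. By the definition of scalar multiples, $(h+1)g=hg+g$. The inductive hypothesis gives $0\preceq hg$ (indeed $0\prec hg$), and the standing hypothesis gives $0\prec g$. Applying \thref{a<b and c<d implies a+c<b+d} with $a=0$, $b=hg$, $c=0$, and $d=g$ yields $0+0\prec hg+g$, i.e.\ $0\prec(h+1)g$, which is exactly $(h+1)g\in G^{+}$ by definition. By induction, $hg\in G^{+}$ for all $h\in\N$.

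There is essentially no obstacle here; the only things to be careful about are purely bookkeeping matters: making sure the hypothesis of \thref{a<b and c<d implies a+c<b+d} is met (it asks for $a\preceq b$ on the first coordinate, and $0\prec hg$ certainly implies $0\preceq hg$ by reflexivity together with anti-symmetry being irrelevant here), and correctly unfolding the recursive definition $(h+1)g=hg+g$ from the notation for integer multiples. One could alternatively phrase the step via \thref{a<b iff a+c=b}, taking the positive element witnessing $0\prec g$, but the direct application of \thref{a<b and c<d implies a+c<b+d} is cleaner and avoids invoking uniqueness.
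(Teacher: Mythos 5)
Your proposal is correct and matches the paper's argument: both proceed by induction on $h$, and your inductive step via \thref{a<b and c<d implies a+c<b+d} is just a packaged form of what the paper does directly (adding $g$ to both sides of $0\prec g$ and applying transitivity). Citing the already-proved lemma is a perfectly clean way to organize the step.
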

\begin{proof}
    If $h=1$, then this is immediate. Take $h=2$. Since $g\in G^{+}$, then $0\prec g$. Using the fact that $G$ is ordered, we have that $g\prec g+g=2g$. Hence, by the transitive property, $0\prec 2g$. By induction on $h$, it will follow that $0\prec hg$, for all $h\in\N$. Therefore, $hg\in G^{+}$.
\end{proof}

\begin{lemma}\thlabel{g<0 implies hg<0}
    Let $G$ be an ordered additive abelian group with respect to $\preceq$. If $g\in G^{-}$, then, for all $h\in\N$, $hg\in G^{-}$. 
\end{lemma}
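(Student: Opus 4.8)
The plan is to reduce this to the facts already proved about positive elements rather than repeat an induction from scratch. First I would apply \thref{g>0 iff -g<0}: since $g\in G^{-}$, that lemma — used in the direction ``$-g\in G^{-}\Rightarrow g\in G^{+}$'' with the roles of $g$ and $-g$ swapped — gives $-g\in G^{+}$. Next I would invoke \thref{0<g implies 0<hg} with $-g$ in place of $g$, obtaining $h(-g)\in G^{+}$ for every $h\in\N$.

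The one piece of bookkeeping is to record that $h(-g)=-(hg)$. This is exactly the identity in part (3) of the notation defining integer multiples in an additive group (read with $-h$ in place of $h$), or it can be dispatched by a one-line induction using associativity and commutativity; I would state it explicitly rather than use it silently. With that in hand, $-(hg)\in G^{+}$, and a final application of \thref{g>0 iff -g<0}, now in the direction ``$-x\in G^{+}\Rightarrow x\in G^{-}$'' with $x=hg$, yields $hg\in G^{-}$, which is the claim.

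As an alternative I would note the direct induction on $h$ that mirrors the proof of \thref{0<g implies 0<hg}: the base case $h=1$ is the hypothesis $g\prec 0$; for the step, from $hg\prec 0$ one adds $g$ to both sides (using that $G$ is ordered) to get $(h+1)g\prec g$, and transitivity with $g\prec 0$ gives $(h+1)g\prec 0$. Either route is routine. There is no genuine obstacle here; the only point worth care is not to take $h(-g)=-(hg)$ for granted, so that identity is the one line I would make sure to justify.
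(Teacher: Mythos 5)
Your proposal is correct, but your primary route differs from the paper's. The paper proves this lemma by direct induction on $h$ (exactly the ``alternative'' you sketch at the end: base case $g\prec 0$, then add $g$ to both sides and use transitivity), keeping it symmetric with the proof of \thref{0<g implies 0<hg}. You instead derive it \emph{from} \thref{0<g implies 0<hg} by negation: $g\in G^{-}$ gives $-g\in G^{+}$ via \thref{g>0 iff -g<0}, then $h(-g)\in G^{+}$, then $hg\in G^{-}$ via \thref{g>0 iff -g<0} again. This is a valid reduction --- neither of the two cited lemmas depends on the present one, so there is no circularity --- and you are right to flag $h(-g)=-(hg)$ as the one identity that must be stated and justified (a one-line induction using that the inverse of a sum is the sum of the inverses in an abelian group); note also that applying \thref{g>0 iff -g<0} to the element $hg$ requires $hg\neq 0$, which follows since $-(hg)=h(-g)\in G^{+}$ and $0\notin G^{+}$. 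Your route avoids repeating an induction at the cost of that bookkeeping identity; the paper's route is self-contained and mirrors its positive counterpart, which is why the author chose it. Either is acceptable here.
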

\begin{proof}
    If $h=1$, then this is immediate. Take $h=2$. Since $g\in G^{-}$, then $g\prec 0$. Using the fact that $G$ is ordered, we have that $2g=g+g\prec g$. Hence, by the transitive property, $2g\prec 0$. By induction on $h$, it will follow that $hg\prec 0$, for all $h\in\N$. Therefore, $hg\in G^{-}$.
\end{proof}

\begin{lemma}\thlabel{ha>0 iff h>0}
    Let $G$ be an ordered additive abelian group with respect to $\preceq$ and let $a\in G^{+}$. For $h\in\Z$, $ha\in G^{+}$ if and only if $h>0$.
\end{lemma}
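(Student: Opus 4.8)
The plan is to prove the two implications separately by a case analysis on the sign of the integer $h$, leaning on the monotonicity lemmas already established. For the easy direction, if $h>0$ then $h\in\N$, and since $a\in G^{+}$, \thref{0<g implies 0<hg} immediately gives $ha\in G^{+}$.

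For the converse I would argue by contraposition: assume $h\leq 0$ and show $ha\notin G^{+}$. If $h=0$, then $ha=0$ by the additive-notation convention, and $0\notin G^{+}$ since $0\prec 0$ contradicts the fact that $\prec$ excludes equality. If $h<0$, then $-h\in\N$, so \thref{0<g implies 0<hg} yields $\pp{-h}a\in G^{+}$; since the convention gives $ha=-\bb{\pp{-h}a}$, \thref{g>0 iff -g<0} shows $ha\in G^{-}$. To finish, note $G^{+}\cap G^{-}=\emptyset$: if $g$ lay in both, then $0\prec g$ and $g\prec 0$, so transitivity would give $0\prec 0$, a contradiction. Hence $ha\notin G^{+}$ in either case.

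The only delicate point is bookkeeping --- keeping the integer $h$ consistent with the definition $hg=-\bb{\pp{-h}g}$ when $h<0$, so that \thref{g>0 iff -g<0} can be applied to $\pp{-h}a$. I expect no real obstacle here: this lemma simply repackages \thref{0<g implies 0<hg} and \thref{g>0 iff -g<0} as a single statement keyed to the sign of $h$, and it will be the natural bridge to later arguments where $h$ ranges over all of $\Z$ rather than just $\N$.
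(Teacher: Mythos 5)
Your proposal is correct and follows essentially the same route as the paper: the forward implication via \thref{0<g implies 0<hg}, and the converse by contraposition with the cases $h=0$ and $h<0$. The only cosmetic difference is that you write $ha=-\bb{(-h)a}$ and negate after applying \thref{0<g implies 0<hg}, whereas the paper writes $ha=(-h)(-a)$ and invokes \thref{g<0 implies hg<0}; both readings are licensed by the same notation item, and your explicit check that $G^{+}\cap G^{-}=\varnothing$ is a harmless extra.
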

\begin{proof}
    For the forward direction, we prove the contrapositive. So, we assume $h\leq 0$ and show that $ha\notin G^{+}$. If $h=0$, then $ha=0$. Since $0\notin G^{+}$, then $ha\notin G^{+}$. If $h<0$, then our notation says that $ha=(-h)(-a)$ and $-h>0$. Since $a\in G^{+}$, then, by \thref{g>0 iff -g<0}, $-a\in G^{-}$. Therefore, by \thref{g<0 implies hg<0}, we have that $ha=(-h)(-a)\in G^{-}$, which implies that $ha\notin G^{+}$. This confirms the contrapositive.

    Conversely, we now assume that $h>0$. By \thref{0<g implies 0<hg}, we have that $ha\in G^{+}$, as needed.
\end{proof}

\begin{theorem}\thlabel{ordered implies torsion-free}
    Every ordered additive abelian group is torsion-free. 
\end{theorem}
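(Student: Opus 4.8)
The plan is to unwind the definition of torsion-free (\thref{torsion-free}) and reduce to a case analysis on the sign of a given nonzero element. Fix an ordered additive abelian group $G$ with order $\preceq$, and let $g\in G$ with $g\neq 0$. I want to show $hg\neq 0$ for every $h\in\N$. The natural first move is to invoke the strongly connected property of the total order: comparing $g$ and $0$, we have $0\preceq g$ or $g\preceq 0$, and since $g\neq 0$ the anti-symmetric property upgrades this to $0\prec g$ or $g\prec 0$. In other words, $g\in G^{+}$ or $g\in G^{-}$.

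Next I would handle the two cases using the lemmas already established. If $g\in G^{+}$, then \thref{0<g implies 0<hg} gives $hg\in G^{+}$ for all $h\in\N$; in particular $0\prec hg$, so $hg\neq 0$ (again by anti-symmetry, since $0\prec hg$ forbids $hg=0$). If instead $g\in G^{-}$, then \thref{g<0 implies hg<0} gives $hg\in G^{-}$ for all $h\in\N$, so $hg\prec 0$ and hence $hg\neq 0$. Since these two cases are exhaustive, we conclude $hg\neq 0$ for all $h\in\N$, which is exactly the statement that $G$ is torsion-free.

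There is essentially no obstacle here: all the real work has already been done in \thref{0<g implies 0<hg} and \thref{g<0 implies hg<0} (which themselves run by induction on $h$ using the translation-invariance of the order), so the theorem is just a packaging of those facts together with the trichotomy coming from the total order. The only point requiring a little care is making sure the case split is genuinely exhaustive — this is where the strongly connected axiom of the total order is used, and it is worth stating explicitly that without that axiom (i.e., for a merely partially ordered group) the argument would not go through.
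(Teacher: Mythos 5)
Your proposal is correct and follows essentially the same route as the paper: split on $g\in G^{+}$ versus $g\in G^{-}$ (justified by the total order) and apply \thref{0<g implies 0<hg} and \thref{g<0 implies hg<0} respectively. The only difference is that you spell out the trichotomy step more explicitly than the paper does, which is fine.
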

\begin{proof}
    Let $G$ be an ordered additive abelian group with respect to $\preceq$. Let $g\in G$ such that $g\neq 0$. Then, $g\in G^{+}$ or $g\in G^{-}$. By \thref{0<g implies 0<hg} and \thref{g<0 implies hg<0}, respectively, we have that $hg\in G^{+}$ or $hg\in G^{-}$, for all $h\in\N$. In either case, we see that $hg\neq 0$, for all $h\in\N$. Since $g\in G$ with $g\neq 0$ was an arbitrary choice, then it follows, by \thref{torsion-free}, that $G$ is torsion-free.
\end{proof}

\begin{corollary}\thlabel{hg=0 implies h=0}
    Let $G$ be an ordered additive abelian group with respect to $\preceq$ and let $g\in G$ with $g\neq 0$. If $hg=0$, for some $h\in\Z$, then $h=0$.
\end{corollary}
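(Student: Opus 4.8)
The plan is to deduce this immediately from \thref{ordered implies torsion-free}, essentially reading off the definition of torsion-free, with the only extra work being the bookkeeping needed to handle negative values of $h$. I would argue by contradiction: suppose $g\neq 0$ and $hg=0$ for some $h\in\Z$, but $h\neq 0$.

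The first step is to reduce to a positive multiple. Since $h\neq 0$, we have $\abs{h}\in\N$, and I claim $\abs{h}g=0$. If $h>0$ this is immediate because $\abs{h}=h$. If $h<0$, then the notation for negative multiples gives $hg=-\bb{(-h)g}$, so $(-h)g=-(hg)=-0=0$; since $\abs{h}=-h$ in this case, we again get $\abs{h}g=0$. (One should also note in passing that $g\neq 0$ forces $-g\neq 0$, though on this route we do not actually need $-g$.)

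The second step applies torsion-freeness. Because $G$ is an ordered additive abelian group, \thref{ordered implies torsion-free} says $G$ is torsion-free. Then \thref{torsion-free}, applied to $g\neq 0$ and the natural number $\abs{h}$, yields $\abs{h}g\neq 0$, which contradicts the previous step. Hence $h=0$, as claimed.

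I do not expect any real obstacle here; the corollary is designed to be a one-line consequence of the theorem. The only point that requires a little care is the $h<0$ case, where one must correctly unwind the definition $hg=-\bb{(-h)g}$ to extract $(-h)g=0$. An alternative would be to avoid absolute values entirely by splitting on whether $g\in G^{+}$ or $g\in G^{-}$ and invoking \thref{0<g implies 0<hg}, \thref{g<0 implies hg<0}, and \thref{g>0 iff -g<0} directly, but routing through torsion-freeness is shorter and matches the way the theorem was just set up.
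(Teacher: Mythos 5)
Your proof is correct and takes essentially the same route as the paper: both deduce the result from \thref{ordered implies torsion-free} and then unwind the notation for negative multiples to cover $h<0$ (the paper applies torsion-freeness to $-g$ via $hg=(-h)(-g)$, while you apply it to $g$ itself after extracting $(-h)g=0$ from $hg=-\bb{(-h)g}$, a negligible difference).
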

\begin{proof}
    First, $hg=0$, when $h=0$, by our notation. We are tasked to show that this is the only solution. Since $G$ be an ordered abelian group, then, by \thref{ordered implies torsion-free}, $G$ is torsion-free. By assumption, $g\neq 0$, which means $-g\neq 0$. Hence, for all $f\in\N$, $fg,f(-g)\neq 0$. From our notation, $f(-g)=(-f)g$, where $-f<0$. So, for all $h\in\Z$ with $h\neq 0$, $hg\neq 0$. Thus, it must be the case that if $hg=0$, then $h=0$.
\end{proof}

\begin{lemma}\thlabel{h<=i iff ha<=ia}
    Let $G$ be an ordered additive abelian group with respect to $\preceq$ and let $a\in G^{+}$. For $h,i\in\Z$, $h\leq i$ if and only if $ha\preceq ia$. Specifically, $h<i$ if and only if $ha\prec ia$ and $h=i$ if and only $ha=ia$.
\end{lemma}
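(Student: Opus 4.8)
The plan is to first establish the strict version of the biconditional, namely that $h<i$ if and only if $ha\prec ia$, and then obtain the non-strict statement and the equality statement as easy consequences. Throughout I would use freely the integer-action identities $ha+(i-h)a=ia$ and $ia-ha=(i-h)a$, which follow from the additive abelian group axioms together with the notation for integer multiples of a group element, and I would lean heavily on \thref{ha>0 iff h>0}, which does essentially all of the real work.

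For the forward direction of the strict statement, suppose $h<i$. Then $i-h\in\N$, so \thref{ha>0 iff h>0} gives $(i-h)a\in G^{+}$, i.e.\ $0\prec(i-h)a$. Since $G$ is ordered, adding $ha$ to both sides yields $ha\prec ha+(i-h)a=ia$, as wanted. For the converse, I would argue by contraposition together with strong connectedness: if $h\not<i$, then $i\leq h$, so either $i=h$, in which case $ia=ha$ and hence $\neg(ha\prec ia)$, or $i<h$, in which case the forward direction just proved gives $ia\prec ha$, and then anti-symmetry (one cannot have both $ha\prec ia$ and $ia\prec ha$) forces $\neg(ha\prec ia)$. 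In either case $\neg(ha\prec ia)$, which completes the strict biconditional.

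Next I would dispatch the equality statement. If $h=i$, then $ha=ia$ by substitution. Conversely, if $ha=ia$, then $(i-h)a=ia-ha=0$; since $a\in G^{+}$ we have $a\neq 0$, so \thref{hg=0 implies h=0} yields $i-h=0$, that is, $h=i$. Finally, for the non-strict statement, observe that $h\leq i$ holds exactly when $h<i$ or $h=i$, which by the two biconditionals above is equivalent to $ha\prec ia$ or $ha=ia$, i.e.\ to $ha\preceq ia$.

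I do not expect a serious obstacle here. The only points requiring a little care are the bookkeeping between the $\Z$-action and the group operation, and the correct use of the order axioms — in particular invoking anti-symmetry (equivalently, irreflexivity of $\prec$) to rule out the impossible cases in the converse directions. Once \thref{ha>0 iff h>0} is available, everything else is routine case analysis.
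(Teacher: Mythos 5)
Your proposal is correct and rests on the same two ingredients as the paper's proof: \thref{ha>0 iff h>0} for the strict forward direction and \thref{hg=0 implies h=0} for the equality case. The only (cosmetic) difference is in the converse: the paper subtracts $ha$ to get $0\preceq(i-h)a$ and applies the reverse implication of \thref{ha>0 iff h>0} directly, whereas you obtain it by contraposition via integer trichotomy and anti-symmetry — both are valid and essentially equivalent.
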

\begin{proof}
    We start by assuming $h\leq i$. If $h\neq i$, then, we have that $i-h>0$. By \thref{ha>0 iff h>0}, it follows that $(i-h)a\in G^{+}$. By definition, this mean $0\prec(i-h)a$. Adding $ha$ to both sides and simplifying, we have that $ha\prec ia$. Otherwise, $h=i$ and we immediately have that $ha=ia$. Hence, we have that $ha\preceq ia$.

    Next, assume that $ha\preceq ia$. Adding $-ha$ to both sides and simplifying gives us $0\preceq (i-h)a$. If $0\prec (i-h)a$, then $(i-h)a\in G^{+}$. By \thref{ha>0 iff h>0}, we get that $i-h>0$. Hence, we can conclude that $h<i$. Otherwise, $(i-h)a=0$. By \thref{hg=0 implies h=0}, $i-h=0$, which means $h=i$. So, we can conclude that $h\leq i$.
\end{proof}

\begin{theorem}\thlabel{torsion-free abelian implies ordered}
    Every torsion-free additive abelian group can be ordered.
\end{theorem}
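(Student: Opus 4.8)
The plan is to realize $G$ as a subgroup of a vector space over $\Q$, equip that vector space with an explicit lexicographic order, and then restrict the order to $G$ using the remark in the definition of an ordered group that subsets inherit the order. The reason for passing to a $\Q$-vector space is that there an ordering can be written down directly from a basis, whereas on $G$ itself one would have to construct a suitable positive cone by hand.

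First I would build the divisible hull $V$ of $G$. Let $V$ be the set of pairs $(a,n)$ with $a\in G$ and $n\in\N$, modulo the relation $(a,n)\sim(b,m)$ iff $ma=nb$ in $G$, and write $[a,n]$ for the class of $(a,n)$. Reflexivity and symmetry of $\sim$ are immediate; transitivity is the one place the hypothesis is genuinely used: if $ma=nb$ and $pb=mc$, then $m(pa)=p(ma)=p(nb)=n(pb)=n(mc)=m(nc)$, and since $m\in\N$ and $G$ is torsion-free (\thref{torsion-free}) we may cancel $m$ to conclude $pa=nc$, i.e.\ $(a,n)\sim(c,p)$; without torsion-freeness this step fails, for instance in $\Z/2\Z$. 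Setting $[a,n]+[b,m]=[ma+nb,\,nm]$ and $\tfrac{p}{q}\cdot[a,n]=[pa,\,qn]$ gives well-defined operations making $V$ a $\Q$-vector space, and $a\mapsto[a,1]$ is an injective group homomorphism $G\hookrightarrow V$. So it is enough to order $V$, since any subgroup of an ordered group inherits the order.

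To order $V$, I would invoke the axiom of choice to fix a $\Q$-basis $\br{e_i}_{i\in I}$ of $V$ together with a well-ordering of $I$. Every nonzero $v\in V$ has a unique expression as a finite sum $v=\sum_i q_ie_i$ with $q_i\in\Q$; let $\iota(v)$ denote the least index with $q_{\iota(v)}\neq0$, call $v$ positive when $q_{\iota(v)}>0$, and set $u\prec v$ iff $v-u$ is positive (with $u\preceq v$ meaning $u\prec v$ or $u=v$). The verification that $\preceq$ is a total order compatible with $+$ is then routine: for $v\neq0$ exactly one of $v$ and $-v$ is positive, since their leading coefficients are negatives of one another, which yields strong connectedness and anti-symmetry; reflexivity holds by definition; transitivity of $\prec$ reduces to the claim that a sum of two positive vectors is positive, which follows by comparing leading indices (the smaller index determines the leading term of the sum, and when the indices coincide the leading coefficient is a sum of two positive rationals); and $(v+w)-(u+w)=v-u$, so $u\prec v$ forces $u+w\prec v+w$ and $w+u\prec w+v$.

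Restricting $\preceq$ from $V$ to the image of $G$ then gives a total order on $G$ with $a\prec b\Rightarrow a+c\prec b+c$, which is exactly what it means for $G$ to be an ordered additive abelian group. I expect the main obstacle to be the bookkeeping surrounding $V$ — verifying that $\sim$ is an equivalence relation, that the vector-space operations descend to classes, and that the map $G\to V$ is injective — because that is where the torsion-free hypothesis does its work; once $V$ is available, the lexicographic order is a standard device and the remaining checks are mechanical. An alternative route is a Zorn's-lemma argument producing a maximal \emph{pure} positive semicone in $G$ directly, but the $\Q$-vector space construction is cleaner to present. It should be noted that the argument uses the axiom of choice twice, for the existence of a basis of $V$ and for the well-ordering of $I$, and that some such appeal is unavoidable when $G$ is not finitely generated.
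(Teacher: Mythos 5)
Your proposal is correct, but it is worth noting that the paper does not actually prove this statement at all: it simply cites Levi's 1942--1943 papers, where the result originates. What you have written out is, in essence, the classical argument behind that citation, made self-contained: embed $G$ into its divisible hull $V\cong G\otimes_{\Z}\Q$ (where torsion-freeness is exactly what makes the relation $(a,n)\sim(b,m)\iff ma=nb$ transitive and the map $a\mapsto[a,1]$ injective), choose a well-ordered $\Q$-basis of $V$, impose the lexicographic order determined by the sign of the leading coefficient, and restrict to the image of $G$. All the individual verifications you sketch go through: exactly one of $v,-v$ is positive for $v\neq 0$, a sum of positive vectors is positive because the leading coefficients cannot cancel, and translation invariance is immediate from $(v+w)-(u+w)=v-u$; together these give the four total-order axioms and compatibility with $+$ as required by the paper's definition of an ordered group. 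The only caveat is that several routine well-definedness checks (that $+$ and scalar multiplication descend to $\sim$-classes, and the vector-space axioms for $V$) are asserted rather than carried out, but these are genuinely mechanical and do not hide any difficulty. So your route buys a complete, readable proof inside the paper at the cost of about a page of bookkeeping plus two explicit uses of the axiom of choice, whereas the paper's ``proof'' buys brevity at the cost of sending the reader to the literature.
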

\begin{proof}
    See \cite{levi1942ordered} and \cite{levi1943contributions}.
\end{proof}

\begin{centering}
    \section{A Missing Interval from \texorpdfstring{$\cc{R}_{G}(h,k)$}{R(h,k)}}
\end{centering}

Before we can prove that the integers in $\bb{hk-h+2,hk-1}$ are missing from $\cc{R}_{G}(h,k)$, we need to prove a known fact about $h$-fold sumsets in an additive abelian group and then we will need to confirm some other useful preliminary results:
\begin{lemma}\thlabel{sumset size is translation invariant}
    Let $A=\br{a_1,a_2,\dots,a_k}$ and $B=\br{a_1+b,a_2+b,\dots, a_k+b}$ be subsets of an additive abelian group $G$. Note that $B$ is a translation of $A$ by $b$. Then, $\abs{hA}=\abs{hB}$. In other words, the size of a sumset is translation invariant.
\end{lemma}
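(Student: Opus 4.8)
The plan is to exhibit an explicit bijection between $hA$ and $hB$, namely translation by $hb$. Concretely, I would define the map $\varphi\colon G\to G$ by $\varphi(x)=x+hb$. Since $G$ is a group, $\varphi$ is a bijection on all of $G$, with inverse $y\mapsto y-hb$; in particular its restriction to any subset is injective, so it suffices to check that $\varphi$ maps $hA$ \emph{onto} $hB$.

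For the forward inclusion $\varphi(hA)\subseteq hB$, I would take an arbitrary $s\in hA$ and use the representation from \thref{sumset}: write $s=b_1a_1+\cdots+b_ka_k$ with $b_1,\dots,b_k\in\N_0$ and $b_1+\cdots+b_k=h$. Then
$$\varphi(s)=s+hb=b_1a_1+\cdots+b_ka_k+\pp{b_1+\cdots+b_k}b=b_1\pp{a_1+b}+\cdots+b_k\pp{a_k+b},$$
which is an element of $hB$ by the same characterization applied to $B=\br{a_1+b,\dots,a_k+b}$. Here I am using commutativity together with the distributive-type identities for nonnegative integer multiples that follow from the Notation section by induction. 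The reverse inclusion $hB\subseteq\varphi(hA)$ is symmetric: given $t\in hB$, write $t=b_1\pp{a_1+b}+\cdots+b_k\pp{a_k+b}$, regroup as $t=\pp{b_1a_1+\cdots+b_ka_k}+hb$, and observe that $t=\varphi\pp{b_1a_1+\cdots+b_ka_k}$ with $b_1a_1+\cdots+b_ka_k\in hA$.

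Combining the two inclusions gives $\varphi(hA)=hB$, and since $\varphi$ is injective this yields a bijection $hA\to hB$, hence $\abs{hA}=\abs{hB}$. There is no real obstacle here; the only point requiring a little care is to define $\varphi$ on the ambient group $G$ rather than "on sumset elements," so that well-definedness is automatic even though an element of $hA$ may admit several representations of the form $b_1a_1+\cdots+b_ka_k$. One could equivalently avoid choosing representations altogether by working from the definition $hA=\br{a_1+\cdots+a_h\mid a_1,\dots,a_h\in A}$ and using the identity $\pp{a_{i_1}+b}+\cdots+\pp{a_{i_h}+b}=\pp{a_{i_1}+\cdots+a_{i_h}}+hb$.
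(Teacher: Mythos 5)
Your proof is correct and follows essentially the same route as the paper: both establish, via the two inclusions and the coefficient representation from \thref{sumset}, that $hB=\br{x+hb\mid x\in hA}$, and then conclude equality of cardinalities. Your explicit observation that translation by $hb$ is a bijection on all of $G$ (so well-definedness is automatic despite non-unique representations) is a nice touch the paper leaves implicit, but it is not a different argument.
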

\begin{proof}
    To show that $\abs{hA}=\abs{hB}$, we will prove that $hB=\br{a+hb\mid a\in hA}$. First, we will confirm that $hB\subseteq\br{a+hb\mid a\in hA}$. Let $x\in hB$. By \thref{sumset}, 
    $$x=c_1\pp{a_1+b}+\cdots+c_k\pp{a_k+b},$$
    for some $c_1,\dots,c_k\in\N_0$ with $c_1+\cdots+c_k=h$. Using our notation and the abelian group properties, we can rewrite this as follows:
    $$x=\pp{c_1a_1+\cdots+c_ka_k}+\pp{c_1b+\cdots+c_kb}.$$
    Since $c_1+\cdots+c_k=h$ and $A=\br{a_1,a_2,\dots,a_k}$, then the first summand is $h$-fold sum of elements of $A$ and the second summand is just $b$ added to itself $h$ times. So, $a=c_1a_1+\cdots+c_ka_k\in hA$, by \thref{sumset}. Hence, we have that
    $$x=a+hb,$$
    which means that $x\in\br{a+hb\mid a\in hA}$. Therefore, we get that $hB\subseteq\br{a+hb\mid a\in hA}$.

    Next, we will show that $\br{a+hb\mid a\in hA}\subseteq hB$. Let $x\in\br{a+hb\mid a\in hA}$. By definition, this means that $x=a+hb$, for some $a\in hA$. Since $a\in hA$, then, by \thref{sumset},
    $$a=c_1a_1+\cdots+c_ka_k,$$
    for some $c_1,\dots,c_k\in\N_0$ with $c_1+\cdots+c_k=h$. By substitution, this leaves us with
    $$x=\pp{c_1a_1+\cdots+c_ka_k}+\pp{c_1+\cdots+c_k}b.$$
    Using our notation and the abelian group properties, we can rewrite this as follows:
    $$x=c_1\pp{a_1+b}+\cdots+c_k\pp{a_k+b}.$$
    Since $B=\br{a_1+b,a_2+b,\dots, a_k+b}$, then $x$ is an $h$-fold sum of elements in $B$, which means $x\in hB$, by \thref{sumset}. Hence, $\br{a+hb\mid a\in hA}\subseteq hB$.

    With this, we can now conclude that $hB=\br{a+hb\mid a\in hA}$. Observe that the set on the right side is the same size as $hA$. Thus, it follows that $\abs{hA}=\abs{hB}$.
\end{proof}

\begin{lemma}\thlabel{nontrivial in hB implies nontrivial in hA}
    Let $A=\br{a_1,a_2,\dots,a_k}$ and $B=\br{a_1,a_2,\dots,a_{k-1}}$ be subsets of an ordered additive abelian group $G$, for some $k\in\N$ with $k>1$ and $a_1\prec\cdots\prec a_k$. Note that this makes $B\subseteq A$. Then, $hB\subseteq hA$. Furthermore, if $b$ is a nontrivial element of $hB$, then $b$ is also a nontrivial element of $hA$. 
\end{lemma}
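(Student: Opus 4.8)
The plan is to prove the two assertions separately. For the containment $hB\subseteq hA$, I would argue directly from \thref{sumset}: any $x\in hB$ can be written as $c_1a_1+\cdots+c_{k-1}a_{k-1}$ with $c_1,\dots,c_{k-1}\in\N_0$ and $c_1+\cdots+c_{k-1}=h$, and taking the coefficient of $a_k$ to be $0$ exhibits $x$ as an element of $hA$. This part is routine and needs no ordering.

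For the statement about nontrivial elements, the key observation is that the trivial list for $hB$ appearing in \thref{trivial element} is precisely an initial segment of the trivial list for $hA$: both begin $ha_1\prec(h-1)a_1+a_2\prec\cdots\prec ha_{k-1}$, and the $A$-list then continues with the extra block $(h-1)a_{k-1}+a_k\prec\cdots\prec a_{k-1}+(h-1)a_k\prec ha_k$. Hence the set of trivial elements of $hA$ is the set of trivial elements of $hB$ together with the ``new'' elements $N=\br{(h-j)a_{k-1}+ja_k\mid 1\leq j\leq h}$, every one of which has coefficient of $a_k$ at least $1$. Since $b$ is nontrivial in $hB$ it is not on the $B$-list, so it suffices to show $b\notin N$.

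To do that I would compare $b$ with $ha_{k-1}$ using the order. Writing $b=c_1a_1+\cdots+c_{k-1}a_{k-1}$ with $c_1+\cdots+c_{k-1}=h$, one has $ha_{k-1}-b=\sum_{i=1}^{k-1}c_i(a_{k-1}-a_i)$, and each summand lies in $G_e$ by \thref{a<b iff a+c=b} and \thref{0<g implies 0<hg}, so $b\preceq ha_{k-1}$. On the other hand, for $1\leq j\leq h$, from $a_{k-1}\prec a_k$ we get $ja_{k-1}\prec ja_k$ (again via \thref{a<b iff a+c=b} and \thref{0<g implies 0<hg}), and adding $(h-j)a_{k-1}$ to both sides with the ordered-group axiom gives $(h-j)a_{k-1}+ja_k\succ ha_{k-1}$. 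Thus every element of $N$ is strictly greater than $ha_{k-1}\succeq b$, so $b\notin N$; combined with $b$ not lying on the $B$-list, $b$ does not lie on the $A$-list, i.e. $b$ is nontrivial in $hA$.

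I do not expect a serious obstacle. The only thing requiring care is the bookkeeping of the two trivial lists and correctly identifying the ``new'' block of the $A$-list as $\br{(h-j)a_{k-1}+ja_k\mid 1\leq j\leq h}$; it is exactly the fact that each of these has a positive coefficient on $a_k$ that forces it to exceed $ha_{k-1}$, and hence to differ from every element of $hB$.
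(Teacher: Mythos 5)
Your proposal is correct and is essentially the paper's argument: both identify the trivial list of $hA$ as the trivial list of $hB$ followed by the block $(h-j)a_{k-1}+ja_k$ ($1\leq j\leq h$), and both rule out $b$ landing in that block because each of its members strictly exceeds $ha_{k-1}$, which dominates every element of $hB$. The only cosmetic difference is that the paper phrases this as a contrapositive and simply asserts that $ha_{k-1}$ is the maximum of $hB$, whereas you prove that fact explicitly via $ha_{k-1}-b=\sum_i c_i\pp{a_{k-1}-a_i}$.
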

\begin{proof}
    Let $b\in hB$. Then, by \thref{sumset},
    $$b=c_1a_1+c_2a_2+\dots+c_{k-1}a_{k-1},$$
    where $c_1,\dots, c_{k-1}\in\N_0$ and $c_1+\cdots+c_{k-1}=h$. Since $B\subseteq A$, then this also satisfies the definition of $hA$. Hence, $b\in hA$, and we have that $hB\subseteq hA$.

    To prove the last statement, we'll prove its contrapositive. Let $b\in hB$. Suppose that $b$ is a trivial element of $hA$, which makes sense because $hB\subseteq hA$, as we just showed. Then, by \thref{trivial element},
    $$b=(h-i)a_j+ia_{j+1},$$
    for some $0\leq i\leq h$ and $1\leq j\leq k-1$. If $j=k-1$ and $1\leq i\leq h$, then, using the ordered property and \thref{h<=i iff ha<=ia}, 
    $$ha_{k-1}=(h-i)a_{k-1}+ia_{k-1}\prec(h-i)a_{k-1}+ia_{k}=b,$$
    where strict inequality is always ensured because $i\neq 0$ and $a_{k-1}\prec a_k$. Since the largest element of $hB$ is $ha_{k-1}$, then this implies $b\not\in hB$, which is a contradiction. Otherwise, $b$ is a trivial element of $hB$, by \thref{trivial element}, which proves the contrapositive. Therefore, it follows that if $b$ is a nontrivial element of $hB$, then $b$ is also a nontrivial element of $hA$. 
\end{proof}

\begin{theorem}\thlabel{1}
    Let $k\geq 4$ and let $G$ be an ordered additive abelian group with respect to $\preceq$. Suppose $0\prec a_1\prec a_2\prec\cdots\prec a_{k-3}$ for $a_{i}\in G^{+}$. If 
    $$A=\br{0,a_1,a_2,\dots, a_{k-3}, a_{k-3}+b, a_{k-3}+b+c},$$ 
    for some $b,c\in G^{+}$, and $c\neq db$, for all $d\in\N$, then $hA$ has at least $h-1$ nontrivial elements.
\end{theorem}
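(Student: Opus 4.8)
The plan is to exhibit $h-1$ explicit elements of $hA$ and show that each one is nontrivial, i.e. does not appear on the increasing list of $hk-h+1$ trivial elements from \thref{trivial element}. Write $x = a_{k-3}$, $y = a_{k-3}+b$, $z = a_{k-3}+b+c$ for the three largest elements of $A$, so that the trivial elements involving only $x,y,z$ are the sums $(h-i-j)x + iy + jz$ with $i+j \le h$ that lie consecutively on the chains $hx \prec \cdots \prec hy \prec \cdots \prec hz$. The natural candidates for nontrivial elements are the ``mixed'' sums that use all three of $x,y,z$, for instance the $h-1$ elements
\begin{equation*}
    w_m = (h-m-1)x + y + mz \qquad (m = 1, 2, \dots, h-1),
\end{equation*}
or a similar one-parameter family. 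Since $x,y,z$ form a three-term arithmetic-like configuration shifted by $b$ then by $c$, one computes $w_m = hx + b + m(b+c) = hx + (m+1)b + mc$, and the task reduces to a statement purely about the subgroup generated by $b$ and $c$: none of these elements can be written in the form $(h-i)x+iy$, $(h-i)y+iz$, etc.

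First I would reduce to the three-element subset $B = \{x, y, z\}$. By \thref{nontrivial in hB implies nontrivial in hA} (applied repeatedly, peeling off $a_{k-3}, \dots, a_1, 0$ from the top down — note here the relevant largest elements are $x,y,z$, so I would instead peel off the \emph{small} elements $0, a_1, \dots$, which requires the symmetric version of that lemma, or equivalently negate and reorder), it suffices to prove that $hB$ has at least $h-1$ nontrivial elements, where now $B = \{x,y,z\}$ with $x \prec y \prec z$. So the whole theorem comes down to the case $k=3$ with the special gap structure $y - x = b$, $z - y = c$, $c \ne db$ for all $d \in \N$. By \thref{sumset size is translation invariant} I may further translate so that $x = 0$, i.e. $B = \{0, b, b+c\}$, and the trivial elements of $hB$ are exactly $\{ib : 0 \le i \le h\} \cup \{(h-i)b + i(b+c) : 0 \le i \le h\} = \{ib : 0 \le i \le h\} \cup \{hb + ic : 0 \le i \le h\}$.

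Next I would write down the $h-1$ candidate elements of $hB$: take $v_m = b + m(b+c) = (m+1)b + mc$ for $m = 1, \dots, h-1$ (these use $b$ once, $b+c$ $m$ times, and $0$ the remaining $h-m-1 \ge 0$ times, so they genuinely lie in $hB$). I must check (i) these are $h-1$ distinct elements, and (ii) none of them is trivial. For distinctness: if $v_m = v_{m'}$ then $(m-m')b = (m'-m)c$, i.e. $(m-m')(b+c) = 0$, forcing $m = m'$ since $b+c \in G^+$ and $G$ is torsion-free (\thref{hg=0 implies h=0}). For nontriviality I must rule out $v_m = ib$ and $v_m = hb + ic$ for all $0 \le i \le h$. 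The equation $v_m = ib$ gives $(m+1-i)b + mc = 0$; using \thref{h<=i iff ha<=ia} together with the fact that $b, b+c \in G^+$ pins down the sign of $(m+1-i)b + mc$ except in the borderline cases, and in the borderline case one gets a relation of the form $c = db$ for some integer $d$, contradicting the hypothesis. The equation $v_m = hb + ic$ rearranges to $(m+1-h)b + (m-i)c = 0$; since $m \le h-1$ the coefficient $m+1-h \le 0$, and a similar sign/arithmetic analysis (again invoking $c \ne db$) produces a contradiction.

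The main obstacle I anticipate is the nontriviality check, specifically handling the comparisons in an \emph{ordered} group rather than in $\Z$: I cannot simply ``solve'' the equations, I must argue via the sign lemmas \thref{h<=i iff ha<=ia}, \thref{ha>0 iff h>0}, and \thref{a<b and c<d implies a+c<b+d} that an expression like $(m+1-i)b + mc$ cannot equal $0$ unless forced, and then extract the forbidden relation $c = db$. The condition $c \ne db$ for all $d \in \N$ is exactly what kills the remaining cases, so the bookkeeping must be arranged to always land on such a relation; I expect to split into cases according to the sign of the integer coefficient of $b$ and use that $c \in G^+$ with $b \in G^+$ to bound things. A secondary subtlety is making sure the ``peeling'' step is stated correctly: \thref{nontrivial in hB implies nontrivial in hA} as written peels off the largest element, so to isolate $\{x,y,z\}$ I either apply an order-reversed analogue or observe directly that any trivial representation in the full $A$ restricted to sums supported on $\{x,y,z\}$ is already a trivial representation in $3$-element $B$ — I would spell this out rather than wave at it.
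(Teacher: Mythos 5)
Your overall strategy --- exhibit $h-1$ explicit elements of the sumset squeezed among the trivial ones and show that the hypothesis $c\neq db$ rules out every possible coincidence --- is the same as the paper's, but your specific one-parameter family does not work, and the step where you claim the borderline cases ``get a relation of the form $c=db$'' is exactly where the proof breaks. Your candidates are $v_m=(m+1)b+mc$ (equivalently $w_m=(h-m-1)x+y+mz$), in which the coefficient of $c$ grows with $m$. The coincidence equation $v_m=ib$ then reads $mc=(i-m-1)b$, and for $m\geq 2$ this is \emph{not} a relation of the form $c=db$: you cannot divide by $m$ in a general ordered group, or even in $\Z$. Concretely, take $G=\Z$, $b=2$, $c=3$ (so $c\neq db$ for every $d\in\N$) and $h\geq 6$: then $v_2=3b+2c=12=6b$, which is the trivial element $(h-6)\cdot 0+6b$ of $h\br{0,b,b+c}$. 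Similarly $b=3$, $c=2$, $h=7$ gives $v_4=5b+4c=23=hb+c$, a trivial element on the second chain. So claim (ii) of your plan is false for this family, and for large $h$ a positive proportion of the $v_m$ can be trivial, so the count of $h-1$ cannot be salvaged from these candidates alone.

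The fix is to choose the family so that $a_{k-3}+b+c$ is used exactly once, pinning the coefficient of $c$ at $1$: the paper takes $ha_{k-3}+eb+c=(h-e)a_{k-3}+(e-1)\pp{a_{k-3}+b}+\pp{a_{k-3}+b+c}$ for $e\in\bb{1,h-1}$. These sit strictly between $ha_{k-3}$ and $h\pp{a_{k-3}+b}+c$ on the increasing trivial list, so the only trivial elements they could equal are $ha_{k-3}+fb$ with $f\in\bb{1,h}$, and that coincidence forces literally $c=(f-e)b$ with $f-e\in\N$ by \thref{ha>0 iff h>0} --- exactly what the hypothesis forbids. Two smaller remarks: the paper does not reduce to the three-element set $\br{x,y,z}$ at all, so the reduction machinery is unnecessary; and if you do want it, your peeling step needs a mirror image of \thref{nontrivial in hB implies nontrivial in hA} that deletes the \emph{smallest} element --- that mirror statement is true by the symmetric argument, but it is not in the paper and you would have to prove it.
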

\begin{proof}
    We can assume that $A=\br{0,a_1,a_2,\dots, a_{k-3}, a_{k-3}+b, a_{k-3}+b+c}$, for some $b,c\in G^{+}$, and $c\neq db$, for all $d\in\N$. As in \thref{trivial element}, we can write out the $hk-h+1$ trivial elements of $hA$. With simplifications, we are left with
    \begin{equation}\label{increasing elements of hA}
        \begin{aligned}
            0&\prec a_1\prec\cdots\prec (h-1)a_1\prec ha_1\\
            &\prec(h-1)a_1+a_2\prec\cdots<a_1+(h-1)a_2\prec ha_2\\
            &\;\;\vdots\\
            &\prec ha_{k-3}+b\prec\cdots\prec ha_{k-3}+(h-1)b\prec h\pp{a_{k-3}+b}\\
            &\prec h\pp{a_{k-3}+b}+c\prec\cdots\prec h\pp{a_{k-3}+b}+(h-1)c\prec h\pp{a_{k-3}+b+c}.
        \end{aligned}
    \end{equation}
    Observe that for all $e\in\bb{1,h-1}$, 
    $$ha_{k-3}\prec ha_{k-3}+eb+c\prec h\pp{a_{k-3}+b}+c$$
    $$\text{and}$$
    $$ha_{k-3}+eb+c=(h-e)a_{k-3}+(e-1)\pp{a_{k-3}+b}+\pp{a_{k-3}+b+c}.$$
    Hence, $ha_{k-3}+eb+c\in hA$, for all $e\in\bb{1,h-1}$, and it fits somewhere in the second to last line of inequalities in \eqref{increasing elements of hA}. These are $h-1$ nontrivial elements of $hA$, otherwise
    $$ha_{k-3}+eb+c=ha_{k-3}+fb,$$
    for some $f\in\bb{1,h}$, will contradict, using \thref{ha>0 iff h>0}, the assumption that $c\neq db$, for all $d\in\N$.
\end{proof}

\begin{theorem}\thlabel{2}
    Let $k\geq 4$ and let $G$ be an ordered additive abelian group with respect to $\preceq$. Suppose $0\prec a_1\prec a_2\prec\cdots\prec a_{k-3}$ for $a_{i}\in G^+$. If 
    $$A=\br{0,a_1,a_2,\dots, a_{k-3}, a_{k-3}+b, a_{k-3}+b+c}$$ 
    and $c=db$, for some $b,c\in G^{+}$ and $d\in\N$ with $d>1$, then $hA$ has at least $h-1$ nontrivial elements.
\end{theorem}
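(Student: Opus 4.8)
The plan is to exhibit $h-1$ nontrivial elements of $hA$, all of the form $ha_{k-3}+mb$ for suitable $m\in\N$. (Note the elements $ha_{k-3}+eb+c$ that worked in \thref{1} become $ha_{k-3}+(e+d)b$ here, and these are trivial as soon as $e+d\leq h$, so a new family is needed.) Write $r=a_{k-3}+b+c$, so that $r=a_{k-3}+(d+1)b$ since $c=db$. I would first record two facts. For all $\gamma,\delta\in\N_0$ with $\gamma+\delta\leq h$, the element $ha_{k-3}+\pp{\gamma+(d+1)\delta}b$ equals $(h-\gamma-\delta)a_{k-3}+\gamma(a_{k-3}+b)+\delta r$, hence lies in $hA$ by \thref{sumset}; let $I$ denote the set of $b$-coefficients $\gamma+(d+1)\delta$ obtained this way. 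Secondly, for $m\in\N$ the element $ha_{k-3}+mb$ is trivial if and only if $m\leq h$, or $m>h$ and $d\mid(m-h)$: by \thref{trivial element} the trivial elements of $hA$ that are $\succeq ha_{k-3}$ are exactly those in the last two lines of the increasing list, namely $ha_{k-3}+ib$ and $h(a_{k-3}+b)+ic=ha_{k-3}+(h+id)b$ for $0\leq i\leq h$; since $ha_{k-3}+mb\succ ha_{k-3}$ when $m\geq 1$, such an element cannot equal any earlier (hence strictly smaller) element of the list, and by \thref{hg=0 implies h=0} the $b$-coefficient determines the element. Combining, $ha_{k-3}+mb$ is a nontrivial element of $hA$ whenever $m\in I$, $m>h$, and $d\nmid(m-h)$, and distinct such $m$ give distinct elements; so it suffices to find $h-1$ integers $m$ with these three properties.

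To produce them I would split on the size of $d$. If $d\geq h$, then $m=d+1,d+2,\dots,d+h-1$ all qualify: each lies in $I$ (take $\delta=1$, $\gamma=m-d-1$), each exceeds $h$, and $0<m-h<d$ so $d\nmid(m-h)$; that is $h-1$ values. If $2\leq d\leq h-1$, I would count. There are $hd$ integers in $\bb{h+1,h(d+1)}$, exactly $h$ of which satisfy $d\mid(m-h)$, leaving $h(d-1)$ that satisfy both $m>h$ and $d\nmid(m-h)$. Writing $m=(d+1)q+\rho$ with $0\leq\rho\leq d$, the minimal value of $\gamma+\delta$ over representations $m=\gamma+(d+1)\delta$ is $q+\rho$, so $m\notin I$ exactly when $q+\rho\geq h+1$; since $m\leq h(d+1)$ forces $q\leq h-1$ once $\rho\geq 1$, this happens only for $2\leq\rho\leq d$ with $q\in\bb{h+1-\rho,h-1}$, a total of $\sum_{\rho=2}^{d}(\rho-1)=\binom{d}{2}$ integers $m\leq h(d+1)$. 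Hence at least $h(d-1)-\binom{d}{2}=\tfrac{(d-1)(2h-d)}{2}$ integers have all three properties, and $\tfrac{(d-1)(2h-d)}{2}\geq h-1$: this is an equality when $d=2$, and for $d\geq 3$ it reduces (dividing by $2(d-2)>0$) to $h\geq\tfrac{d+1}{2}$, which follows from $h\geq d+1$. In every case $hA$ has at least $h-1$ nontrivial elements.

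The main obstacle is exactly the range $2\leq d\leq h-1$: for $d\geq h$ one ``$\delta=1$'' family already suffices, but for smaller $d$ one must exploit the whole set $I$, which is the union of the shifted blocks $\br{(d+1)\delta,\dots,(d+1)\delta+(h-\delta)}$ for $0\leq\delta\leq h$. The delicate points are the clean description of which integers $m\leq h(d+1)$ are omitted from $I$ and the arithmetic showing the number $\binom{d}{2}$ of omissions is small enough, together with the second fact above — in particular the use of \thref{trivial element} to see that every trivial element of $hA$ lying above $ha_{k-3}$ already appears in the last two lines of the list, so no element $ha_{k-3}+mb$ is trivial ``by accident'' through coinciding with a trivial element built from $0,a_1,\dots,a_{k-4}$.
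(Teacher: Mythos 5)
Your argument is correct, but it takes a genuinely different and considerably heavier route than the paper's. The paper simply exhibits the single explicit family $h\pp{a_{k-3}+b}+(ed-1)b$ for $e\in\bb{1,h-1}$ (in your notation, $m=h+ed-1$): each such element is realized as $a_{k-3}+(h-e-1)\pp{a_{k-3}+b}+e\pp{a_{k-3}+b+db}$, is sandwiched strictly between $h\pp{a_{k-3}+b}$ and $h\pp{a_{k-3}+b+db}$, and can only coincide with a trivial element of the form $h\pp{a_{k-3}+b}+fdb$, which by \thref{hg=0 implies h=0} would force $d\mid 1$, contradicting $d>1$. This one family works uniformly for every $d>1$, with no case split and no counting. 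Your proof instead builds the full set $I$ of achievable $b$-coefficients, characterizes the trivial elements above $ha_{k-3}$ by their $b$-coefficient, and then splits on $d\geq h$ versus $2\leq d\leq h-1$, handling the latter by a count of omissions from $I$; all the steps check out (the minimal $\gamma+\delta$ computation, the $\binom{d}{2}$ omission count, and the inequality $\tfrac{(d-1)(2h-d)}{2}\geq h-1$ are each correct, the last using $h\geq d+1$ in that case). What your framework buys is a sharper count --- for $3\leq d\leq h-1$ it actually produces strictly more than $h-1$ nontrivial elements of the form $ha_{k-3}+mb$ --- and a clean reusable description of which such elements are trivial. What it costs is the entire second half of the argument: note that the values $m=h+ed-1$, $e\in\bb{1,h-1}$, already lie in your set $I$ (take $\gamma=h-e-1$, $\delta=e$), exceed $h$, and satisfy $d\nmid(m-h)$ for every $d\geq 2$, so your own first paragraph plus this one observation would have finished the proof without any case analysis.
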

\begin{proof}
    We can assume that $A=\br{0,a_1,a_2,\dots, a_{k-3}, a_{k-3}+b, a_{k-3}+b+c}$ and $c=db$, for some $b,c\in G^{+}$ and $d\in\N$ with $d>1$. As in \thref{trivial element}, we can write out the $hk-h+1$ trivial elements of $hA$ using $c=db$. With simplifications, we have
    \begin{equation}\label{increasing elements with c=db}
        \begin{aligned}
            0&\prec a_1\prec\cdots\prec (h-1)a_1\prec ha_1\\
            &\prec (h-1)a_1+a_2\prec\cdots\prec a_1+(h-1)a_2\prec ha_2\\
            &\;\;\vdots\\
            &\prec ha_{k-3}+b\prec\cdots\prec ha_{k-3}+(h-1)b\prec h\pp{a_{k-3}+b}\\
            &\prec h\pp{a_{k-3}+b}+db\prec\cdots\prec h\pp{a_{k-3}+b}+(h-1)db\prec h\pp{a_{k-3}+b+db}.
        \end{aligned}
    \end{equation}
    Observe that for all $e\in\bb{1,h-1}$,
    $$h\pp{a_{k-3}+b}\prec h\pp{a_{k-3}+b}+(ed-1)b\prec h\pp{a_{k-3}+b+db},$$
    where the first inequality needs the fact that $d>1$. Notice that $h\pp{a_{k-3}+b}+(ed-1)b\in hA$, for all $e\in\bb{1,h-1}$ because
    $$h\pp{a_{k-3}+b}+(ed-1)b=a_{k-3}+(h-e-1)\pp{a_{k-3}+b}+e\pp{a_{k-3}+b+db}\in hA,$$
    where we used $db$ in place of $c$ as allowed by the assumption that $c=db$. 
    
    From the inequality, these fit somewhere in the last line of \eqref{increasing elements with c=db}. Therefore, these are $h-1$ nontrivial elements of $hA$, otherwise 
    $$h\pp{a_{k-3}+b}+(ed-1)b=h\pp{a_{k-3}+b}+fdb,$$
    for some $f\in\bb{1,h-1}$, will imply, using \thref{hg=0 implies h=0}, that $d=1$, which contradicts $d>1$.
\end{proof}

\begin{theorem}\thlabel{existence of h-1 nontrivial elements in hB}
     Let $G$ be an ordered additive abelian group with respect to $\preceq$. For every $k\geq 4$, if $A=\br{0,a,a+b_1,\dots, a+b_1+\cdots+b_{k-2}}\subseteq G$ with $a,b_1,\dots, b_{k-2}\in G^{+}$ and $b_{k-2}=b_{k-3}$, then either $A$ is a $k$-term arithmetic progression or the set $B=\br{0,a,a+b_1,\dots, a+b_1+\cdots+b_{k-3}}\subseteq G$ produces the sumset $hB$ that has at least $h-1$ nontrivial elements.
\end{theorem}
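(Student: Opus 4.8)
The plan is to truncate $A$ down to its last ``break point'' and thereby land on a set of exactly the form handled by \thref{1} and \thref{2}, then transport the nontrivial elements produced there back up to $hB$ via \thref{nontrivial in hB implies nontrivial in hA}. Write the consecutive differences of $A$ as $b_0:=a,\,b_1,\,\dots,\,b_{k-2}$, all in $G^{+}$. If $b_0=b_1=\cdots=b_{k-2}$, then $A=\br{0,a,2a,\dots,(k-1)a}$ is a $k$-term arithmetic progression and we are finished. Otherwise let $i^{*}$ be the largest index in $\br{1,\dots,k-2}$ with $b_{i^{*}}\neq b_{i^{*}-1}$. The one essential use of the hypothesis $b_{k-2}=b_{k-3}$ is precisely that it forces $i^{*}\leq k-3$.

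Next I would look at the truncation $B^{\prime}=\br{0,a,a+b_1,\dots,a+b_1+\cdots+b_{i^{*}}}$, a set of $i^{*}+2\geq 3$ elements whose last two differences, $b_{i^{*}-1}$ and $b_{i^{*}}$, are unequal by the choice of $i^{*}$. Setting $b=b_{i^{*}-1}$ and $c=b_{i^{*}}$, this is exactly the configuration of \thref{1} and \thref{2}: either $c$ is not a positive integer multiple of $b$, and \thref{1} gives that $hB^{\prime}$ has at least $h-1$ nontrivial elements, or $c=db$ for some $d\in\N$, and since $c\neq b$ forces $d>1$, \thref{2} gives the same conclusion. Because $i^{*}\leq k-3$, the set $B^{\prime}$ is obtained from $B=\br{0,a,a+b_1,\dots,a+b_1+\cdots+b_{k-3}}$ by repeatedly deleting the largest element; applying \thref{nontrivial in hB implies nontrivial in hA} once per deletion carries each nontrivial element of $hB^{\prime}$ up to a nontrivial element of $hB$, which finishes the proof.

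The only delicate point — more a matter of care than a real obstacle — is the boundary case $i^{*}=1$, where $B^{\prime}=\br{0,a,a+b_1}$ has only three elements while \thref{1} and \thref{2} are stated for sets of size at least $4$. Here I would observe that the size hypothesis is used only cosmetically in those proofs: with the interior list $a_1,\dots,a_{k-3}$ empty and ``$a_{k-3}$'' read as $0$, the very same witnesses ($ha_{k-3}+eb+c$ in one case, $h(a_{k-3}+b)+(ed-1)b$ in the other, for $e\in\bb{1,h-1}$) still lie in $hB^{\prime}$ and still miss the trivial list of \thref{trivial element}. Alternatively one can just check this three-element case by hand: when $b_1$ is not a multiple of $a$ the elements $ea+b_1$ for $e\in\bb{1,h-1}$ are nontrivial in $hB^{\prime}$, and when $b_1=da$ with $d\geq 2$ the elements $(h-1+ed)a$ for $e\in\bb{1,h-1}$ are. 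Once this is noted, everything else in the argument is bookkeeping, since the substantive content is already contained in \thref{1}, \thref{2} and \thref{nontrivial in hB implies nontrivial in hA}.
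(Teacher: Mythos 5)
Your proposal is correct and is essentially the paper's argument with the induction unrolled: the paper descends by induction on $k$ to the first index from the top at which consecutive differences disagree and there invokes \thref{1} or \thref{2}, lifting the nontrivial elements back up with \thref{nontrivial in hB implies nontrivial in hA}, which is exactly your truncation at $i^{*}$. Your hand-checked three-element boundary case $i^{*}=1$ (with witnesses $ea+b_1$ and $(h-1+ed)a$) coincides with the paper's $k=4$ base case.
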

\begin{proof}
    We'll proceed by induction on $k$.
    
    \noindent\textbf{\underline{Base Case:}} Suppose $k=4$.

    Then, we assume $A=\br{0,a,a+b_1, a+b_1+b_2}$ with $a,b_1,b_2\in G^{+}$ and $b_2=b_1$. This means we are working with $A=\br{0,a, a+b_1, a+2b_1}$ and $B=\br{0,a,a+b_1}$. We can list out the $3h+1$ elements of $hB$ as seen in \thref{trivial element}. With simplifications, we obtain
    \begin{equation}\label{increasing elements with b_2=b_1}
        \begin{aligned}
            0&\prec a\prec 2a\prec\cdots\prec ha\\
            &\prec ha+b_1\prec ha+2b_1\prec\cdots\prec ha+(h-1)b_1\prec h\pp{a+b_1}.\\
        \end{aligned}
    \end{equation}
    Now, we will split this into two cases:

    \noindent\textbf{Case 1:} Suppose $b_1\neq ea$, for all $e\in\N$.

    Observe that for all $f\in\bb{1,h-1}$,
    $$a\prec fa+b_1\prec ha+b_1\quad\text{and}\quad fa+b_1=(h-f)0+(f-1)a+(a+b_1).$$
    So, we have that $fa+b_1\in hB$ and fits in the first line of \eqref{increasing elements with b_2=b_1}. These are $h-1$ nontrivial elements of $hB$, otherwise 
    $$fa+b_1=ga,$$
    for some $g\in\bb{2,h}$, will contradict, using \thref{ha>0 iff h>0}, the Case 1 assumption.

    \noindent\textbf{Case 2:} Suppose $b_1=ea$, for some $e\in\N$.

    If $e=1$, then $b_1=a$, and $A=\br{0,a,a+b_1,a+2b_1}$ becomes $A=\br{0,a,2a,3a}$, which means $A$ is a $4$-term arithmetic progression.

    Otherwise, $e>1$, which means $A=\br{0,a,a+b_1,a+2b_1}$ becomes $A=\br{0,a,(e+1)a,(2e+1)a}$, and $B=\br{0,a,(e+1)a}$. Rewriting \eqref{increasing elements with b_2=b_1}, we have
    \begin{equation}\label{increasing elements with b_2=b_1 and e>1}
        \begin{aligned}
            0&\prec a\prec 2a\prec\cdots\prec ha\\
            &\prec (h+e)a\prec (h+2e)a\prec\cdots\prec\bb{h+(h-1)e}a\prec (h+he)a.\\
        \end{aligned}
    \end{equation}
    Observe that for all $f\in\bb{1,h-1}$,
    $$ha\prec\pp{h+ef-1}a\prec(h+he)a,$$
    where the first inequality needs the assumption that $e>1$. It follows that $\pp{h+ef-1}a\in hB$, for all $f\in\bb{1,h-1}$ because
    $$\pp{h+ef-1}a=0+(h-f-1)a+f(e+1)a\in hB.$$
    From the inequality, we know that these terms fit somewhere in the second line of \eqref{increasing elements with b_2=b_1 and e>1}. These are $h-1$ nontrivial elements of $hB$, otherwise 
    $$\pp{h+ef-1}a=(h+ge)a,$$ 
    for some $g\in\bb{1,h-1}$, will imply, using \thref{hg=0 implies h=0}, that $e=1$, which contradicts $e>1$. 

    At this point, we exhausted all necessary cases. Collecting all the results, we see that either $A$ is a $4$-term arithmetic progression or $hB$ has at least $h-1$ nontrivial elements. Thus, the base case holds.

    \noindent\textbf{\underline{Inductive Hypothesis:}} Assume that if $A=\br{0,a,a+b_1,\dots, a+b_1+\cdots+b_{k-2}}$ with \\$a,b_1,\dots, b_{k-2}\in G^{+}$ and $b_{k-2}=b_{k-3}$, then either $A$ is $k$-term arithmetic progression or the set \\$B=\br{0,a,a+b_1,\dots, a+b_1+\cdots+b_{k-3}}$ produces a sumset $hB$ that has at least $h-1$ nontrivial elements.

    \noindent\textbf{\underline{Inductive Step:}} Show that if $A=\br{0,a,a+b_1,\dots, a+b_1+\cdots+b_{k-1}}$ with \\$a,b_1,\dots, b_{k-1}\in G^{+}$ and $b_{k-1}=b_{k-2}$, then either $A$ is a $(k+1)$-term arithmetic progression or the set \\$B=\br{0,a,a+b_1,\dots, a+b_1+\cdots+b_{k-2}}$ produces a sumset $hB$ that has at least $h-1$ nontrivial elements.

    We start by assuming $A=\br{0,a,a+b_1,\dots, a+b_1+\cdots+b_{k-1}}$ with $a,b_1,\dots, b_{k-1}\in G^{+}$ and $b_{k-1}=b_{k-2}$. Then, we are working with
    \begin{align*}
        A&=\br{0,a,a+b_1,\dots, a+b_1+\cdots+b_{k-2}, a+b_1+\cdots+2b_{k-2}},\\
        B&=\br{0,a,a+b_1,\dots, a+b_1+\cdots+b_{k-2}}.
    \end{align*}
    
    \noindent\textbf{Case 1:} Suppose $b_{k-2}\neq eb_{k-3}$, for all $e\in\N$.

    By \thref{1} on $B$, we have that there are $h-1$ nontrivial elements in $hB$.

    \noindent\textbf{Case 2:} Suppose $b_{k-2}=eb_{k-3}$, for some $e\in\N$.
    
    If $e>1$, then, by \thref{2} on $B$, there are $h-1$ nontrivial elements in $hB$. 
    
    Otherwise, $e=1$ and this means $b_{k-2}=b_{k-3}$. Therefore, by the Inductive Hypothesis, either $B$ is a $k$-term arithmetic progression or the set $C=\br{0,a,a+b_1,\dots, a+b_1+\cdots+b_{k-3}}$ produces the sumset $hC$ that has $h-1$ nontrivial elements. In the case that $B$ is a $k$-term arithmetic progression, then it follows that $b_i=a$, for all $1\leq i\leq k-2$.
    Substituting $b_i=a$ into the elements in $A$, for all $1\leq i\leq k-2$, gives us
    $$A=\br{0,a,2a,\dots,(k-1)a,ka},$$
    which indicates that $A$ is a $(k+1)$-term arithmetic progression. In the case that $hC$ has $h-1$ nontrivial elements, then, by \thref{nontrivial in hB implies nontrivial in hA}, these elements are also nontrivial in $hB$.
    
    At this point, we considered all necessary cases and it can be seen that either $A$ is a $(k+1)$-term arithmetic progression or $hB$ has at least $h-1$ nontrivial elements. This completes the inductive step.

    Thus, we may now conclude that for every $k\geq 4$, if $A=\br{0,a,a+b_1,\dots, a+b_1+\cdots+b_{k-2}}$ with $a,b_1,\dots, b_{k-2}\in\N$ and $b_{k-2}=b_{k-3}$, then either $A$ is a $k$-term arithmetic progression or the set \\$B=\br{0,a,a+b_1,\dots, a+b_1+\cdots+b_{k-3}}$ produces the sumset $hB$ that has at least $h-1$ nontrivial elements.
\end{proof}

The next theorem is the main result. Most of the heavy lifting occurred in the preceding lemmas and theorems.
\begin{theorem}
    Let $G$ be a torsion-free additive abelian group. For all $k\geq 4$, $\mathcal{R}_{G}(h,k)\cap\bb{hk-h+2,hk-1}=\varnothing$.
\end{theorem}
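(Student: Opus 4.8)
The plan is to reduce $A$ to the normal form used in Theorems \thref{1}, \thref{2}, and \thref{existence of h-1 nontrivial elements in hB}, run a short trichotomy on the last two gaps of $A$, and then finish with a counting argument comparing trivial and nontrivial sumset elements. First I would apply \thref{torsion-free abelian implies ordered} to fix a total order $\preceq$ on $G$ compatible with addition, making \thref{trivial element} and all the ordered-group lemmas available. Given an arbitrary $A\subseteq G$ with $\abs{A}=k\geq 4$, \thref{sumset size is translation invariant} lets me translate so that the $\preceq$-least element of $A$ is $0$; writing $A=\br{0,a_1,\dots,a_{k-1}}$ with $0\prec a_1\prec\cdots\prec a_{k-1}$ and, by \thref{a<b iff a+c=b}, letting $b_i\in G^{+}$ be the unique element with $a_i+b_i=a_{i+1}$ (and $a:=a_1$), we obtain $A=\br{0,a,a+b_1,\dots,a+b_1+\cdots+b_{k-2}}$ with $a,b_1,\dots,b_{k-2}\in G^{+}$. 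Comparing this increasing list with the one appearing in each cited theorem identifies $b_{k-3}$ with their $b$ and $b_{k-2}$ with their $c$.

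Next I would split on how $b_{k-2}$ relates to $b_{k-3}$. This trichotomy is exhaustive, and the three cases are disjoint because, by \thref{hg=0 implies h=0}, an element can equal a positive integer multiple of $b_{k-3}$ in at most one way. If $b_{k-2}\ne d\,b_{k-3}$ for every $d\in\N$, then \thref{1} gives at least $h-1$ nontrivial elements in $hA$. If $b_{k-2}=d\,b_{k-3}$ for some integer $d>1$, then \thref{2} does the same. If $b_{k-2}=b_{k-3}$, then \thref{existence of h-1 nontrivial elements in hB} says that either $A$ is a $k$-term arithmetic progression or the sumset of $B=\br{0,a,\dots,a+b_1+\cdots+b_{k-3}}$ — which is $A$ with its largest element removed — has at least $h-1$ nontrivial elements, and in that last subcase \thref{nontrivial in hB implies nontrivial in hA} promotes those to nontrivial elements of $hA$. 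So in every case, either $A$ is a $k$-term arithmetic progression or $hA$ has at least $h-1$ nontrivial elements.

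To finish, recall from \thref{trivial element} that the trivial elements of $hA$ form a strictly increasing, hence distinct, list of exactly $hk-h+1$ members of $hA$, and that every element of $hA$ is either trivial or nontrivial; thus $\abs{hA}=(hk-h+1)+N$ with $N\ge 0$ the number of nontrivial elements. If $A$ is a $k$-term arithmetic progression, translating so that $A=\br{0,d,2d,\dots,(k-1)d}$ for some $d\in G^{+}$ yields $hA=\br{md\mid 0\le m\le h(k-1)}$, whose $hk-h+1$ elements are distinct by \thref{h<=i iff ha<=ia}, so $\abs{hA}=hk-h+1<hk-h+2$. Otherwise $N\ge h-1$, so $\abs{hA}\ge(hk-h+1)+(h-1)=hk>hk-1$. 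In both cases $\abs{hA}\notin\bb{hk-h+2,hk-1}$, and since $A$ ranged over all size-$k$ subsets of $G$, this gives $\mathcal{R}_{G}(h,k)\cap\bb{hk-h+2,hk-1}=\varnothing$.

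I expect essentially no new difficulty here: the genuine work — the explicit production of $h-1$ nontrivial sums in each regime and the induction on $k$ inside \thref{existence of h-1 nontrivial elements in hB} — is already carried out in the preceding results. Within this proof the only points needing care are ensuring the trichotomy is exhaustive and mutually exclusive and correctly aligning the $(b,c)$ notation of Theorems \thref{1} and \thref{2} with the gap pair $(b_{k-3},b_{k-2})$; the arithmetic-progression size computation and the closing inequality are routine.
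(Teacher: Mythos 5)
Your proposal is correct and follows essentially the same route as the paper: order $G$ via \thref{torsion-free abelian implies ordered}, normalize by translation, run the trichotomy on whether the last gap is a non-multiple of the second-to-last gap, a $d>1$ multiple, or equal to it, and invoke \thref{1}, \thref{2}, \thref{existence of h-1 nontrivial elements in hB} with \thref{nontrivial in hB implies nontrivial in hA} before counting trivial versus nontrivial elements. The only (immaterial) difference is that you compute $\abs{hA}=hk-h+1$ for an arithmetic progression directly, where the paper cites Nathanson's Theorem 2.
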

\begin{proof}
    Let $k\geq 4$ be given. By \thref{torsion-free abelian implies ordered}, $G$ can be ordered and we'll say that it is ordered with respect to $\preceq$. Since \thref{sumset size is translation invariant} confirmed that a translation doesn't change the size of the sumset, it suffices to consider the set $A=\br{0,a_1,\dots,a_{k-1}}\subseteq G$ with $0\prec a_1\prec\cdots\prec a_{k-1}$. Based on this inequality, we may apply \thref{a<b iff a+c=b} to write
    $$a_{k-2}=a_{k-3}+b\quad\text{and}\quad a_{k-1}=a_{k-3}+b+c$$
    for some $b,c\in G^{+}$. This leaves us with 
    $$A=\br{0,a_1,a_2,\dots, a_{k-3},a_{k-3}+b,a_{k-3}+b+c}.$$ 
    We will need two cases:

    \noindent\textbf{Case 1:} Suppose that $c\neq db$, for all $d\in\N$.

    By \thref{1}, it follows that $hA$ has $h-1$ nontrivial elements. As noted in \thref{trivial element}, $hA$ has $hk-h+1$ trivial elements. So, we must have that
    $$\abs{hA}\geq (hk-h+1)+(h-1)=hk.$$

    \noindent\textbf{Case 2:} Suppose that $c=db$, for all $d\in\N$.

    If $d>1$, then, by \thref{2}, we know that $hA$ has $h-1$ nontrivial elements. As noted in \thref{trivial element}, $hA$ has $hk-h+1$ trivial elements. So, we must have that
    $$\abs{hA}\geq (hk-h+1)+(h-1)=hk.$$
    
    If $d=1$, then we have that $c=b$. By \thref{existence of h-1 nontrivial elements in hB}, we know that either $A$ is a $k$-term arithmetic progression or that the set
    $$B=\br{0,a_1,a_2,\dots, a_{k-3}, a_{k-3}+b}$$
    produces the sumset $hB$ with $h-1$ nontrivial elements. In the case that $A$ is a $k$-term arithmetic progression, then we know from Theorem 2 in Nathanson's paper \cite{nathanson2025problemsadditivenumbertheory} that $\abs{hA}=hk-h+1$. In the case that $hB$ has at least $h-1$ nontrivial elements, it follows by \thref{nontrivial in hB implies nontrivial in hA} that $hA$ has at least $h-1$ nontrivial elements. As noted in \thref{trivial element}, $hA$ has $hk-h+1$ trivial elements. Therefore, we must have that
    $$\abs{hA}\geq (hk-h+1)+(h-1)=hk.$$

    At this point, we considered all the required cases and saw that either
    $$\abs{hA}=hk-h+1\quad\text{or}\quad\abs{hA}\geq hk.$$
    Since $k\geq 4$ was arbitrary, then this is true for any $k\geq 4$. Thus, for all $k\geq 4$,
    $$\mathcal{R}_{G}(h,k)\cap\bb{hk-h+2,hk-1}=\varnothing,$$
    as desired.
\end{proof}

An immediate question to ask is: can this interval be enlarged? We know from Theorem 2 in Nathanson's paper \cite{nathanson2025problemsadditivenumbertheory} that a $k$-term arithmetic progression produces an $h$-fold sumset with size $hk-h+1$. So, the interval can't be extended to the left. As for extending it to the right, we prove the following two results:

\begin{theorem}\thlabel{A exists with |hA|=hk}
    Let $G$ be an ordered additive abelian group with respect to $\preceq$ and let $a\in G$ such that $a\neq 0$. From \thref{g>0 iff -g<0}, we can take $a\in G^{+}$. For $k\geq 3$, if $A=\br{0,a,2a,\dots, (k-2)a,ka}$, then $\abs{hA}=hk$. 
\end{theorem}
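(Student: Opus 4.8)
The plan is to push the computation down to $\Z$ and then identify the relevant $\Z$-sumset as an interval of integers with a single gap. Since $G$ is ordered it is torsion-free by \thref{ordered implies torsion-free}, and \thref{h<=i iff ha<=ia} (applied with $a\in G^{+}$) shows that $m\mapsto ma$ is injective on $\Z$: $ma=na$ forces $m=n$. Writing every element of $A$ as an integer multiple of $a$, \thref{sumset} gives $hA=\br{ca\mid c\in hC}$, where $C=\br{0,1,2,\dots,k-2,k}\subseteq\Z$ is a set of $k$ integers and $hC$ is its $h$-fold sumset in $\Z$; the injectivity then yields $\abs{hA}=\abs{hC}$, so it suffices to prove $\abs{hC}=hk$.

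Next I would decompose $C=C'\cup\br{k}$ with $C'=\br{0,1,\dots,k-2}$. An element of $hC$ is a sum of $h$ terms, say $j$ of them equal to $k$ and the remaining $h-j$ drawn from $C'$, for some $j\in\bb{0,h}$. A sum of $m$ elements of $C'$ realizes exactly the integers $0,1,\dots,m(k-2)$ (the extremes are clear, and one passes between consecutive values by changing a single summand), so
$$hC=\bigcup_{j=0}^{h}\bb{jk,\;jk+(h-j)(k-2)}=\bigcup_{j=0}^{h}\bb{jk,\;h(k-2)+2j}.$$

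The core of the argument is to show this union equals $\bb{0,hk-2}\cup\br{hk}$, whence $\abs{hC}=(hk-1)+1=hk$. For $0\le j\le h-2$, the left endpoint $(j+1)k$ of the $(j{+}1)$-st interval is at most one more than the right endpoint $h(k-2)+2j$ of the $j$-th interval: this rearranges to $(h-j)(k-2)\ge k-1$, which holds because $h-j\ge 2$ and $k\ge 3$ give $(h-j)(k-2)\ge 2(k-2)\ge k-1$. Hence the intervals for $j=0,1,\dots,h-1$ overlap or abut consecutively, so their union is $\bb{0,\;h(k-2)+2(h-1)}=\bb{0,hk-2}$; the remaining term $j=h$ contributes the singleton $\br{hk}$, disjoint from $\bb{0,hk-2}$ since $hk-1>hk-2$. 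Equivalently, $hk-1\notin hC$: a representation using $j<h$ copies of $k$ would force the other $h-j$ summands to total $(h-j)k-1>(h-j)(k-2)$, while $j=h$ gives $hk\neq hk-1$.

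The main obstacle is the bookkeeping that isolates the unique gap: verifying the boundary inequality $(h-j)(k-2)\ge k-1$ for $j\le h-2$, which is exactly where the hypothesis $k\ge 3$ is used, and confirming that the only integer below $hk$ left uncovered is $hk-1$. The reduction to $\Z$ and the description of the sums of $m$ elements of $C'$ are routine, and the degenerate case $h=2$ (where only $\bb{0,2(k-2)}$ and $\bb{k,2k-2}$ merge before the singleton $\br{2k}$) is covered by the same inequalities.
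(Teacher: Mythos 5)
Your proposal is correct, and it reaches the paper's conclusion by a genuinely different decomposition. Both arguments ultimately identify $hA$ with $\br{ja\mid j\in\bb{0,hk}-\br{hk-1}}$, but the paper works directly in $G$: it bounds every element other than $h(ka)$ above by $(hk-2)a$ using the ordering lemmas to exclude $(hk-1)a$, and then produces explicit representations of each remaining multiple via the division algorithm ($j=qk+r$ with $q$ copies of $ka$, plus a separate hand-built family $(k-i)a+(h-1)(ka)$ for the top block). You instead first transport everything to $\Z$ through the injection $m\mapsto ma$ (justified exactly as you say by \thref{h<=i iff ha<=ia}), then stratify $hC$ by the number $j$ of summands equal to $k$, use the standard fact that the $m$-fold sumset of $\br{0,\dots,k-2}$ is the interval $\bb{0,m(k-2)}$, and merge the resulting intervals via the inequality $(h-j)(k-2)\geq k-1$ for $j\leq h-2$. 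Your route buys a cleaner structural picture — the single gap at $hk-1$ falls out as the one place where consecutive strata fail to abut — at the cost of the interval-merging bookkeeping; the paper's route avoids any sumset-of-intervals lemma but pays with the division-algorithm case analysis. Both correctly locate the use of $k\geq 3$ (for you, in $2(k-2)\geq k-1$; for the paper, implicitly in the $r=k-1$ branch of the construction), and your verification that $hk-1\notin hC$ matches the paper's exclusion of $(hk-1)a$ in substance. No gaps.
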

\begin{proof}
    Let $k\geq 3$ be given and assume that $A=\br{0,a,2a,\dots, (k-2)a,ka}$. We will prove that $hA=\br{ja\;\big|\; j\in[0,hk]-\br{hk-1}}$. For all $c\in hA$ with $c\neq h(ka)$,
    $$c=b_0(0)+b_1a+b_2(2a)+\cdots+b_{k-2}\bb{(k-2)a}+b_{k-1}(ka),$$
    where $b_0,b_1,b_2,\dots, b_{k-1}\in\N_0$ with $b_0+\cdots+b_{k-1}=h$, by \thref{sumset}. Since $c\neq h(ka)$, then $b_i$, for some $0\leq i\leq k-2$, must be nonzero. Since $a\in G^{+}$, then, by \thref{h<=i iff ha<=ia}, $0\prec a\prec 2a\prec\cdots\prec(k-2)a\prec ka$. If $b_{k-1}=h-1$ and $b_{k-2}=1$, then 
    $$c=(k-2)a+(h-1)(ka)=(hk-2)a.$$
    Otherwise, using repeated applications of \thref{a<b and c<d implies a+c<b+d} on the terms with nonzero coefficients in the equation for $c$, we get that
    $$c\prec (k-2)a+(h-1)(ka)=(hk-2)a.$$
    Hence, $(hk-1)a\notin hA$. By \thref{h<=i iff ha<=ia}, $(hk-2)a\prec (hk)a=h(ka)$. So, $hA\subseteq\br{ja\;\big|\; j\in[0,hk]-\br{hk-1}}$. Now, we will confirm that $\br{ja\;\big|\; j\in[0,hk]-\br{hk-1}}\subseteq hA$. Observe that
    \begin{alignat*}{2}
        (hk-i)a&=(k-i)a+(h-1)(ka)\in hA,
    \end{alignat*}
    for all $2\leq i\leq k$. It remains to show that $\br{ja\;\big|\; j\in\bb{0,hk-(k+1)}}=\br{ja\;\big|\; j\in\bb{0,(h-2)k+(k-1)}}\subseteq hA$. Let $c\in\br{ja\;\big|\; j\in\bb{0,(h-2)k+(k-1)}}$. Then, $c=ja$, for some $j\in\bb{0,(h-2)k+(k-1)}$. By the division algorithm, there exists $q\in\bb{0,h-2}$ and $r\in\bb{0,k-1}$ such that
    $$j=qk+r.$$
    In any case of $r$, we get
    $$c=ja=
    \begin{cases}
        (h-q-1)0+ra+q(ka)&\quad\text{if } 0\leq r\leq k-2,\\
        (h-q-2)0+a+(k-2)a+q(ka)&\quad\text{if } r=k-1.
    \end{cases}
    $$
    It now follows that $\br{ja\;\big|\; j\in[0,hk]-\br{hk-1}}\subseteq hA$. 
    
    Therefore, with both inclusions, we have that $hA=\br{ja\;\big|\; j\in[0,hk]-\br{hk-1}}$, which has size $hk$. By our arbitrary choice of $k\geq 3$, we can conclude that $\abs{hA}=hk$, for all $k\geq 3$.
\end{proof}
\begin{corollary}
    Let $G$ be a torsion-free additive abelian group with $a\in G$ such that $a\neq 0$. If $k\geq 3$, then $hk\in\cc{R}_{G}\pp{h,k}$.
\end{corollary}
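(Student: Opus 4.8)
The plan is to read this off \thref{A exists with |hA|=hk}, with only a little bookkeeping needed to pass from the torsion-free setting to the ordered setting. First I would invoke \thref{torsion-free abelian implies ordered} to fix a total order $\preceq$ on $G$ making it an ordered additive abelian group; this is exactly where the torsion-free hypothesis is used. With an order in hand we have access to all the earlier machinery, in particular \thref{A exists with |hA|=hk} itself.

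Next, given $a\in G$ with $a\neq 0$, I would apply \thref{g>0 iff -g<0} to replace $a$ by $-a$ if necessary, so that without loss of generality $a\in G^{+}$, and then form the set $A=\br{0,a,2a,\dots,(k-2)a,ka}$. Before quoting anything about $hA$ one should check that $\abs{A}=k$, i.e.\ that the listed elements are pairwise distinct: since $a\in G^{+}$, \thref{h<=i iff ha<=ia} gives the strictly increasing chain $0\prec a\prec 2a\prec\cdots\prec (k-2)a\prec ka$ of $k$ elements, so indeed $\abs{A}=k$. (In a group with torsion these multiples could coincide, so this small verification is genuinely using the hypothesis.)

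Finally, \thref{A exists with |hA|=hk} applies to this $A$ and yields $\abs{hA}=hk$ for every $k\geq 3$. Since $A\subseteq G$ with $\abs{A}=k$, the definition of $\cc{R}_{G}(h,k)$ gives $hk=\abs{hA}\in\cc{R}_{G}(h,k)$, which is the claim. I do not expect any real obstacle: all the substantive work is already done in \thref{A exists with |hA|=hk}, and the only points requiring care are the reduction to the ordered case via \thref{torsion-free abelian implies ordered} and the observation that $A$ really does have $k$ distinct elements.
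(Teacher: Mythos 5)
Your proposal is correct and follows the same route as the paper: order $G$ via \thref{torsion-free abelian implies ordered} and then apply \thref{A exists with |hA|=hk} to the set $A=\br{0,a,2a,\dots,(k-2)a,ka}$. Your extra check that $\abs{A}=k$ via \thref{h<=i iff ha<=ia} is a worthwhile detail that the paper leaves implicit, but it does not change the argument.
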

\begin{proof}
    Since $G$ is a torsion-free additive abelian group, then, by \thref{torsion-free abelian implies ordered}, $G$ can be ordered. Thus, by \thref{A exists with |hA|=hk}, we can conclude that $hk\in\cc{R}_{G}\pp{h,k}$.
\end{proof}
As a consequence of these two results, we see that the missing interval can't be extended to the right. 

\begin{centering}
    \section*{Acknowledgments}
\end{centering}

\addcontentsline{toc}{section}{Acknowledgments}

A sincere thank you to Melvyn Nathanson for introducing me to this topic, and providing me with the necessary resources and guidance to confirm these results.

\newpage
\printbibliography[title={\centering References}]

@misc{nathanson2025problemsadditivenumbertheory,
      title={Problems in additive number theory, VI: Sizes of sumsets}, 
      author={Melvyn B. Nathanson},
      year={2025},
      eprint={2411.02365},
      archivePrefix={arXiv},
      primaryClass={math.NT},
      url={https://arxiv.org/abs/2411.02365}, 
}

@article{DBLP:journals/cdm/MohanP23,
  author       = {Mohan and
                  Ram Krishna Pandey},
  title        = {Extended inverse theorems for $h$-fold sumsets
                  in integers},
  journal      = {Contributions Discret. Math.},
  volume       = {18},
  number       = {2},
  pages        = {129--145},
  year         = {2023},
  url          = {https://doi.org/10.55016/ojs/cdm.v18i2.73074},
  doi          = {10.55016/OJS/CDM.V18I2.73074},
  timestamp    = {Fri, 21 Mar 2025 16:58:02 +0100},
  biburl       = {https://dblp.org/rec/journals/cdm/MohanP23.bib},
  bibsource    = {dblp computer science bibliography, https://dblp.org}
}

@article{Tang2021SOME,
	author = {Tang, Min and Xing, Yun},
	journal = {Bulletin of the Korean Mathematical Society},
	doi = {10.4134/BKMS.B200054},
	number = {2},
	year = {2021},
	month = {3},
	pages = {305--313},
	title = {SOME INVERSE RESULTS OF SUMSETS},
	url = {https://doi.org/10.4134/BKMS.B200054},
	volume = {58},
}

@inproceedings{levi1942ordered,
  title={Ordered groups},
  author={Levi, Friedrich Wilhelm},
  booktitle={Proceedings of the Indian Academy of Sciences-Section A},
  volume={16},
  number={4},
  pages={256},
  year={1942},
  organization={Springer India New Delhi}
}

@inproceedings{levi1943contributions,
  title={Contributions to the theory of ordered groups},
  author={Levi, FW},
  booktitle={Proceedings of the Indian Academy of Sciences-Section A},
  volume={17},
  pages={199--201},
  year={1943},
  organization={Springer India}
}
\addcontentsline{toc}{section}{References}
\end{document}